\newtheorem{theorem}{Theorem}
\newtheorem{lemma}[theorem]{Lemma}
\newtheorem{corollary}[theorem]{Corollary}
\newtheorem{conjecture}[theorem]{Conjecture}
\newtheorem*{conjecture*}{Conjecture}
\newcommand{\R}{{\mathbb R}}
\newcommand{\rodl}{R\"odl}
\newcommand{\Q}{{\mathbb Q}}
\newcommand{\A}{{\mathbb A}}
\begin{document}
\title{Transitive Sets and Cyclic Quadrilaterals}
\author{Imre Leader\footnote{Department
of Pure Mathematics and Mathematical Statistics,
Centre for Mathematical Sciences,
Wilberforce Road,
Cambridge CB3 0WB,
England.} \footnote{{\tt I.Leader@dpmms.cam.ac.uk}}\and 
Paul A.~Russell\footnotemark[1] \footnote{{\tt P.A.Russell@dpmms.cam.ac.uk}}
\and Mark Walters \footnote{School of Mathematical Sciences, Queen Mary,
University of London, London E1 4NS, England} 
\footnote{\tt m.walters@qmul.ac.uk}}
\maketitle
\begin{abstract}
Motivated by some questions in Euclidean Ramsey theory, our aim in
this note is to show that there exists a cyclic quadrilateral that
does not embed into any transitive set (in any dimension). We show
that in fact this holds for almost all cyclic quadrilaterals, and we
also give explicit examples of such cyclic quadrilaterals. These are
the first explicit examples of spherical sets that do not embed into
transitive sets.
\end{abstract}

\subsubsection{Introduction}
A finite set $X$ in some Euclidean space $\R^n$ is called {\it Ramsey}
if for every positive integer $k$ there exists $d$ such that whenever
$\R^d$ is $k$-coloured it contains a monochromatic set congruent to
$X$. A famous question of Erd\H{o}s, Graham, Montgomery, Rothschild,
Spencer and Straus~\cite{egmrss} (made into a conjecture by Graham
in~\cite{conj}) asks if the Ramsey sets are precisely the spherical
sets (where a set is {\it spherical} if it lies on the surface of a
sphere).

In~\cite{us}, a `rival' conjecture is made: that a set is Ramsey if
and only if it is a subset of a finite transitive set (in some
dimension) -- here we say that a set is {\it transitive} if its
isometry group acts transitively on it (or equivalently, loosely
speaking, if all points of the set look the same). It is not obvious
that these conjectures are different: certainly every finite
transitive set $X$ is spherical (for example, because all of its
points must lie on the surface of the minimal sphere containing $X$),
but does every finite spherical set embed into a transitive set (in
some dimension)? This is answered in the negative in~\cite{us}, where
it is shown that, for $k \geq 16$, almost every cyclic $k$-gon
does not embed into a (finite) transitive set.

However, in \cite{us} we conjectured that it was unnecessary to
go as far as 16-gons to see this phenomenon: that in fact there exists
a cyclic quadrilateral that does not embed into a transitive
set. (This would be the smallest possible case, as all triangles do
embed into transitive sets -- see e.g.~\cite{triangles} or~\cite{us}).

Our aim in this note is to prove this. In fact, we show that almost
every cyclic quadrilateral does not embed into a transitive set. Our
proof allows us to give an explicit example of such a cyclic
quadrilateral. This is the first explicit example of a spherical set
that does not embed into a transitive set.

Curiously, it seems that the right approach is to focus on the linear
properties of the quadrilateral, rather than its metric
properties. Our main result is as follows.

\begin{theorem}\label{t:main}
Let $x$, $y$, $z$ and $w$ be four distinct points lying on a circle
such that
\[
w = z + \alpha (x-z)  + \beta (y-z),
\]
where $\alpha\not=1$ and $\beta$ is transcendental over
$\Q(\alpha)$. Then 
$xyzw$ does not embed into a transitive set.
\end{theorem}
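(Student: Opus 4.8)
The plan is to suppose for a contradiction that $xyzw$ embeds isometrically into a finite transitive set $T\subseteq\R^n$, and to extract from this an algebraic relation between $\alpha$ and $\beta$ over $\Q$, contradicting the hypothesis that $\beta$ is transcendental over $\Q(\alpha)$. Since isometries are affine, and affine maps preserve affine combinations, the image of $xyzw$ inside $T$ still satisfies $w=z+\alpha(x-z)+\beta(y-z)$ with the \emph{same} $\alpha,\beta$ (these coefficients are well defined precisely because $x,y,z$ are affinely independent, three distinct concyclic points never being collinear); so we may assume $x,y,z,w\in T$. Translate so that the centroid of $T$ is the origin; then the symmetry group $G$ of $T$ acts transitively on $T$ and orthogonally on $\operatorname{span}(T)$, and no point of $T$ is the origin. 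Picking $g_y,g_z,g_w\in G$ with $g_yx=y$, $g_zx=z$, $g_wx=w$, the whole linear content of the configuration becomes the single equation $\rho(\xi)\,x=0$, where $\rho$ is the representation of $G$ on $\operatorname{span}(T)$ and
\[
\xi \;=\; g_w-\alpha e-\beta g_y-(1-\alpha-\beta)g_z\;=\;\eta_0+\alpha\eta_1+\beta\eta_2\;\in\;\R[G],\qquad
\eta_0=g_w-g_z,\ \eta_1=g_z-e,\ \eta_2=g_z-g_y,
\]
an element of the group algebra that is affine--linear in $(\alpha,\beta)$ with the $\eta_i$ lying in $\mathbb{Z}[G]$.

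Now we use transitivity. Since $x$ is a cyclic vector, $\operatorname{span}(T)=\R[G]\,x$, and applying each $g\in G$ to the equation gives $\rho(g\xi)x=0$. Hence for every irreducible representation $W$ of $G$ occurring in $\operatorname{span}(T)$, the projection $\pi_W(x)$ of $x$ onto the $W$-isotypic subspace is nonzero (otherwise $\R[G]\pi_W(x)=\pi_W(\operatorname{span}(T))$ would vanish) and is killed by $\rho_W(\xi)$, so $\rho_W(\xi)$ is singular. The key point is that $\det\rho_W(\xi)$, viewed as a function of $(\alpha,\beta)$, is a polynomial with \emph{algebraic} coefficients: it depends only on the isomorphism class of $\rho_W$ together with the rational datum $\xi=\eta_0+\alpha\eta_1+\beta\eta_2$, and every irreducible representation of a finite group can be realised over a number field, so $\det\rho_W(\xi)$ may be computed in such a model, yielding a polynomial $Q_W\in\overline{\Q}[\alpha,\beta]$ that vanishes at our point $(\alpha,\beta)$. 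If for \emph{some} such $W$ the polynomial $Q_W$ does not vanish identically on the vertical line through our value of $\alpha$ --- that is, $Q_W(\alpha,\cdot)$ is a nonzero polynomial in $\beta$ over $\overline{\Q}(\alpha)$ --- then $\beta$ is algebraic over $\overline{\Q}(\alpha)$, hence over $\Q(\alpha)$: the desired contradiction. Thus the theorem reduces to this non-degeneracy statement.

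Establishing that non-degeneracy is, I expect, the main obstacle, and it is genuinely not automatic: a pencil of matrices can be singular for every value of its parameter without possessing a common kernel vector. Writing $\rho_W(\xi(\alpha,\beta))=R_0+\beta R_1$ with $R_1=\rho_W(g_z)-\rho_W(g_y)$, identical singularity in $\beta$ would force $R_1$ to be singular, i.e.\ $g_y^{-1}g_z$ to act with a fixed vector on $W$; letting $(\alpha,\beta)$ tend to infinity along other lines, the obstruction case is that $g_y^{-1}g_z$ --- and, similarly, various other products of $g_y,g_z,g_w$ --- act with a fixed vector on \emph{every} irreducible constituent of $\operatorname{span}(T)$. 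The plan is to rule this out by showing that such pervasive fixed vectors force a linear degeneration among $x,y,z,w$ incompatible with their being four \emph{distinct} points of a \emph{cyclic} quadrilateral with $\alpha\neq1$; pinning down exactly how each of these three hypotheses obstructs the degenerate group actions is where I expect the real work to lie. Granting this, the remaining assertions follow easily: the set of $(\alpha,\beta)$ surviving the argument lies inside $\{(\alpha,\beta):\beta\text{ is algebraic over }\Q(\alpha)\}$, which meets every vertical line $\{\alpha\}\times\R$ in only countably many points and hence has Lebesgue measure zero in $\R^2$, so almost every cyclic quadrilateral fails to embed; and one can exhibit an explicit cyclic quadrilateral with, say, $\alpha$ rational (and $\neq1$) and $\beta=\pi$, giving a concrete non-embeddable example.
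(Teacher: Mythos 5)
Your setup reproduces the paper's strategy faithfully: pass to the symmetry group, encode the affine relation as the singularity of a matrix $\rho_W(\xi)$ depending affinely on $(\alpha,\beta)$, observe that its determinant is a polynomial with algebraic coefficients (realisability of representations of finite groups over algebraic numbers), and conclude that $\beta$ is algebraic over $\Q(\alpha)$ provided the determinant does not vanish identically in $\beta$. But the non-degeneracy statement you defer is precisely the heart of the matter, and your sketched plan for it (showing that $g_y^{-1}g_z$ and related elements having fixed vectors on every constituent forces a degeneration of $x,y,z,w$) is not carried out and does not match how the difficulty is actually resolved; as you yourself note, a pencil of matrices can be identically singular without a common kernel, so nothing so far rules out $Q_W(\alpha,\cdot)\equiv 0$ for every constituent $W$ you consider. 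Indeed, in your framework the obstruction genuinely occurs: you decompose under the \emph{full} group $G$, and on any irreducible $G$-constituent in which $g_y$, $g_z$, $g_w$ happen to share a fixed vector $v$ one has $\rho_W(\xi)v=0$ for \emph{all} $(\alpha,\beta)$, so $Q_W\equiv 0$ there; you would still need to produce one good constituent, and you have no mechanism for that.

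The paper's resolution has two ingredients you are missing. First, it works not with $G$ but with the subgroup $G'$ generated by the three chosen elements; then a common fixed vector of the three maps spans a trivial subrepresentation, so it cannot exist in a non-trivial irreducible constituent of $G'$, and at least one non-trivial irreducible $G'$-constituent carries the parameters (on the $G'$-trivial part the projected quadrilateral is a single point). Second, the non-vanishing of $P(\alpha,\cdot)$ for fixed $\alpha\ne 0,1$ is proved by an explicit choice of $\beta$ in each range of $\alpha$ (e.g.\ $\beta=(1-\alpha)/2$ for $0<\alpha<1$ or $\alpha>1$, and $\beta=\alpha$ for $\alpha<0$) which turns the relation $w=\alpha A(w)+\beta B(w)+(1-\alpha-\beta)C(w)$ into a statement that one of the four points $w,A(w),B(w),C(w)$ is a strict convex combination of the others; since the representation is orthogonal these four points all have the same norm, so strict convexity of the ball forces them to coincide, i.e.\ forces a common fixed vector --- impossible as above. (The case $\alpha=0$ is excluded separately in the theorem: it would make $w,y,z$ collinear, contradicting that they are three distinct concyclic points; note your plan quietly needs this exclusion too, since the lemma's non-degeneracy fails at $\alpha=0$.) Without some argument of this kind, your proposal establishes only the easy half of the proof.
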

\noindent%
We remark that the condition $\alpha \ne1$ is necessary.  Indeed, for
any $\beta$ there is a trapezium with parameters $1$ and $\beta$ (in
other words, with this value of $\beta$ and with $\alpha=1)$ which
embeds in a transitive set with symmetry group $D_8$. We note also
that the `cyclic' condition is trivially redundant as all
quadrilaterals that embed in a transitive set are cyclic.

Since `many' pairs $(\alpha,\beta)$ can occur as parameters of cyclic
quadrilaterals -- for example there exists such a cyclic quadrilateral
for every $\alpha$ and $\beta$ sufficiently close to $1$ -- it is
routine to verify from this that almost every cyclic quadrilateral
does not embed into a transitive set.  It is also possible to give an
explicit example -- even one with some symmetry, such as a kite.
\begin{corollary}\label{c:main}
The cyclic quadrilateral with vertices
\[
(-1,0),(1,0),(a,\sqrt{1-a^2}),(a,-\sqrt{1-a^2}),
\] 
where $a$ is transcendental, does not embed into any transitive
set.\qed
\end{corollary}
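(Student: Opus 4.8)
The plan is to deduce the corollary directly from Theorem~\ref{t:main}. Whether a set embeds into a transitive set does not depend on how we label its four vertices, so it is enough to label the vertices $x,y,z,w$ in some convenient way and to exhibit a relation $w = z + \alpha(x-z) + \beta(y-z)$ with $\alpha\ne1$ and $\beta$ transcendental over $\Q(\alpha)$; the corollary then follows at once.

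Before giving the right labelling it is worth noting why one must be slightly careful. If one solves for one vertex in terms of the other three in the naive way, the coefficients $\alpha$ and $\beta$ both come out as rational functions of $a$, and --- because the quadrilateral is a kite, and so has a line of symmetry --- they turn out to be algebraically dependent over $\Q$ (one typically finds $\alpha=\beta$, or $\Q(\alpha)=\Q(\beta)=\Q(a)$). In each such case $\beta$ is algebraic over $\Q(\alpha)$ and Theorem~\ref{t:main} does not apply. The trick is to choose the labelling so that the symmetry of the kite forces one of the two coefficients to be a rational constant.

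To this end, express one of the two vertices off the symmetry axis in terms of its mirror image and the two vertices lying on the axis. Put $x=(a,-\sqrt{1-a^2})$, $y=(1,0)$, $z=(-1,0)$ and $w=(a,\sqrt{1-a^2})$ (here $a\in(-1,1)$ so that the points are real). The midpoint $\tfrac{x+w}{2}=(a,0)$ lies on the line through $y$ and $z$, and indeed $(a,0)=z+\tfrac{1+a}{2}(y-z)$; since $w=2\cdot\tfrac{x+w}{2}-x$, this gives
\[
w = z - (x-z) + (1+a)(y-z),
\]
so that $\alpha=-1$ and $\beta=1+a$. Now $\alpha=-1\ne1$, and since $a$ is transcendental over $\Q$ so is $1+a$; hence $\beta$ is transcendental over $\Q=\Q(\alpha)$. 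The four points are distinct and lie on the unit circle, so Theorem~\ref{t:main} applies and the quadrilateral does not embed into any transitive set.

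Thus the only genuine obstacle is spotting the correct labelling: a careless choice leaves $\alpha$ and $\beta$ algebraically dependent over $\Q$ and the theorem inapplicable, whereas exploiting the reflection symmetry of the kite pins down $\alpha=-1$ and reduces the corollary to a one-line verification.
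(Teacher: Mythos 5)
Your proof is correct and takes essentially the same route as the paper: the paper deduces the corollary from Theorem~\ref{t:main} by taking $z=(-1,0)$, $y=(1,0)$, $x=(a,\sqrt{1-a^2})$, $w=(a,-\sqrt{1-a^2})$, which is just the mirror image of your labelling and yields the same parameters $\alpha=-1$, $\beta=1+a$. Your explicit computation and your remark on why a careless labelling (e.g.\ solving for $(1,0)$ in terms of the other three, which forces $\alpha=\beta$) would make the theorem inapplicable simply spell out what the paper leaves implicit.
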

\noindent%
Corollary~\ref{c:main} follows from Theorem~\ref{t:main} upon taking $z=(-1,0)$,
$y=(1,0)$, $x=(a,\sqrt{1-a^2})$ and $w=(a,-\sqrt{1-a^2})$.

As explained above, this gives us an explicit spherical set that we
conjecture is not Ramsey.
\begin{conjecture}
Let $-1<\alpha<1$ be transcendental. Then the cyclic quadrilateral
with vertices
  \[
  (-1,0),(1,0),(a,\sqrt{1-a^2}),(a,-\sqrt{1-a^2})
  \] 
is not Ramsey.
\end{conjecture}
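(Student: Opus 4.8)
The plan is to attack the not-Ramsey property directly, rather than to try to deduce it from the results already established. The tempting route --- invoke Corollary~\ref{c:main}, which tells us our quadrilateral does not embed into any transitive set, and then appeal to the conjecture of~\cite{us} that the Ramsey sets are exactly the subtransitive ones --- does not furnish a proof, since it is precisely the direction ``Ramsey $\Rightarrow$ subtransitive'' of that conjecture which is open. So I would instead aim to construct, directly, colourings witnessing non-Ramseyness: a number of colours $k$ such that, for every dimension $d$, there is a $k$-colouring of $\R^d$ containing no \mic\ of $X$.

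First I would recall the one standard mechanism for proving non-Ramseyness, due to \erdos, Graham, Montgomery, Rothschild, Spencer and Straus~\cite{egmrss}: if a set has an affine dependence $\sum_i \lambda_i p_i = 0$ with $\sum_i \lambda_i = 0$ but $\sum_i \lambda_i \|p_i\|^2 \neq 0$, then colouring $\R^d$ according to a suitable finite colouring of the values of $\|x\|^2$ destroys every congruent copy. The immediate difficulty is that our $X$ is spherical, so \emph{every} affine dependence among its points also satisfies $\sum_i \lambda_i \|p_i\|^2 = 0$, and this quadratic colouring is therefore useless here.

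The hard part --- and the reason the statement is recorded as a conjecture --- is to convert the \emph{linear} obstruction that defeats transitive embeddings into a colouring of all of $\R^d$. The feature doing the work in Theorem~\ref{t:main} is that $a$, and hence the parameters $\alpha,\beta$ in the relation $w = z + \alpha(x-z) + \beta(y-z)$, is transcendental, whereas the coordinates of any transitive set are algebraic; a congruent copy of $X$ must reproduce this rigid affine relation, forcing a transcendental parameter that no finite, algebraically structured configuration can absorb. I would therefore search for a colouring built from a transcendence-degree or valuation-theoretic invariant that is generic on a transcendence basis of $\R$ over $\Q$ and insensitive to algebraic perturbation, hoping that the relation above can only be realised monochromatically when the relevant transcendence data coincide.

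The obstacle I expect to be decisive, at least with present techniques, is that a Ramsey colouring must be defined on \emph{all} of $\R^d$ and must simultaneously defeat \emph{every} isometric copy of $X$, including copies whose coordinates are algebraically unrelated to any fixed basis. The transcendence condition is a statement about a single embedding and its affine structure, and there is no known way to localise it into a finite colouring that behaves well under the full action of the isometry group on copies. In short, the missing ingredient is any method whatsoever for certifying that a \emph{spherical} set is non-Ramsey: supplying one here would amount either to proving the open direction of the \cite{us} conjecture or to inventing a genuinely new colouring technique, which is exactly why we state this as a conjecture rather than as a theorem.
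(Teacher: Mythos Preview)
Your analysis is entirely appropriate: the statement is labelled a \emph{Conjecture} in the paper and no proof is offered there either. The paper's ``treatment'' of this statement is simply to observe that Corollary~\ref{c:main} exhibits an explicit spherical set not embedding in any transitive set, and then to record the conjecture that it is not Ramsey --- exactly the logic you spell out in your first paragraph. Your further discussion of why the EGMRSS quadratic-colouring argument is unavailable for spherical sets, and why the transcendence obstruction to transitive embedding does not obviously convert into a colouring, is a correct and helpful elaboration of \emph{why} this remains a conjecture rather than a theorem; the paper does not include any such discussion.
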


\subsubsection{Proof of Theorem~\ref{t:main}}
It will be convenient to use the term \emph{quadrilateral} to denote
any set of four coplanar points, whether or not they are distinct. We say
that it is \emph{trivial} if all four points are coincident. Note
that a non-trivial quadrilateral may still have some points
coincident.

Suppose that $xyzw$ is any quadrilateral. It may be the
case (for example whenever $x,y,z$ are not collinear) that there exist
(not necessarily unique) $\alpha,\beta$ such that
$w=z+\alpha(x-z)+\beta(y-z)$.  In this case we say that the
quadrilateral has \emph{parameters} $\alpha,\beta$. 

One reason why this parameterisation in terms of linear rather than
metric properties is useful is as follows. Suppose we have a
non-trivial quadrilateral in a vector space $V \oplus W$. Then the
projections onto $V$ and $W$ have the same parameters.  Moreover at
least one of these projections is non-trivial.  (Note that both
projections may have coincident points even if the original
quadrilateral does not, for example if the original quadrilateral is
a rectangle.) This will allow us to focus on irreducible representations.

The following is our key result.

\begin{lemma}\label{l:transitive}
  Let $G$ be a finite group generated by elements $g$, $h$ and $k$,
  and let $A$, $B$ and $C$ be the maps corresponding to $g$, $h$ and
  $k$ respectively in a non-trivial irreducible real orthogonal
  representation of $G$.  Then the collection of pairs
  $(\alpha,\beta)$ that can occur in quadrilaterals of the form
  $A(w),B(w),C(w),w$ for some $w\ne 0$ is exactly the zero set of a polynomial
  $P(\alpha,\beta)$ with algebraic coefficients. Moreover, for any
  fixed $\alpha\not=0,1$ the polynomial $P$ viewed as a polynomial in
  $\beta$ is not identically zero.
\end{lemma}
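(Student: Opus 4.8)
The plan is to turn the parameter equation into a rank condition. A quadrilateral of the form $A(w),B(w),C(w),w$ has parameters $(\alpha,\beta)$ exactly when $w=C(w)+\alpha\bigl(A(w)-C(w)\bigr)+\beta\bigl(B(w)-C(w)\bigr)$, and rearranging gives $M(\alpha,\beta)\,w=0$, where
\[
M(\alpha,\beta)=I-\alpha A-\beta B-(1-\alpha-\beta)C .
\]
So the pairs $(\alpha,\beta)$ that occur are precisely those for which $M(\alpha,\beta)$ is singular, and we take $P(\alpha,\beta)=\det M(\alpha,\beta)$. The coefficients of $P$ are fixed polynomial expressions in the entries of $A,B,C$, and $\det M$ is unchanged on replacing $A,B,C$ by $TAT^{-1},TBT^{-1},TCT^{-1}$ for any invertible $T$; since every representation of a finite group can be realised with algebraic matrix entries, we may assume $A,B,C$ are algebraic, whence $P$ has algebraic coefficients.

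For the final assertion I would fix $\alpha\neq0,1$ and suppose, for a contradiction, that $P(\alpha,\beta)\equiv0$ as a polynomial in $\beta$. Put $F=\alpha A+(1-\alpha)C-I$ and $S=B-C$, so that $M(\alpha,\beta)=-(F+\beta S)$ and the pencil $F+\beta S$ is singular for every $\beta$. Hence it has a nonzero kernel vector over $\R(\beta)$; clearing denominators and cancelling the largest power of $\beta$ produces a polynomial null vector $v(\beta)=v_0+v_1\beta+\cdots$ with $v_0\neq0$. Comparing the coefficients of $\beta^0$ and $\beta^1$ in $(F+\beta S)v(\beta)=0$ yields
\[
Fv_0=0\qquad\text{and}\qquad Sv_0\in\operatorname{im}F .
\]

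The heart of the argument --- and the only step requiring a real idea --- is that orthogonality now forces $v_0$ to be fixed by $A$, $B$ and $C$. From $Fv_0=0$ we have $v_0=\alpha Av_0+(1-\alpha)Cv_0$; taking squared norms and using $\|Av_0\|=\|Cv_0\|=\|v_0\|$, this rearranges to $2\alpha(1-\alpha)\bigl(\langle Av_0,Cv_0\rangle-\|v_0\|^2\bigr)=0$, so $\langle Av_0,Cv_0\rangle=\|v_0\|^2$ since $\alpha\neq0,1$. This is the equality case of Cauchy--Schwarz for two vectors of length $\|v_0\|$, so $Av_0=Cv_0$, and then $v_0=\alpha Av_0+(1-\alpha)Cv_0=Av_0$; thus $Av_0=Cv_0=v_0$. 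Running the same computation with $A^{-1},C^{-1}$ gives $\ker F^{*}=\ker F$, so $\operatorname{im}F=(\ker F)^{\perp}$. Since $Sv_0=Bv_0-Cv_0=Bv_0-v_0$ lies in $\operatorname{im}F=(\ker F)^{\perp}$ while $v_0\in\ker F$, we get $\langle Bv_0,v_0\rangle=\|v_0\|^2$, and Cauchy--Schwarz again gives $Bv_0=v_0$. Hence the nonzero vector $v_0$ is fixed by $A$, $B$ and $C$, so by all of $G$, and $\R v_0$ is a nonzero trivial subrepresentation --- contradicting that the representation is irreducible and non-trivial.

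I expect the main obstacle to be precisely this key step: extracting the rigid conclusions $Av_0=Cv_0=v_0$, and then $Bv_0=v_0$, from the purely determinantal hypothesis; this is where the orthogonality of the representation and the restriction $\alpha\neq0,1$ come in (cf.\ the remark after Theorem~\ref{t:main}). The remaining ingredients --- the existence of a polynomial kernel vector with nonzero constant term for an identically singular pencil, realisability of a finite group's representation over $\overline{\Q}$, and the identity $\operatorname{im}F=(\ker F^{*})^{\perp}$ --- are standard, and I would quote them without proof.
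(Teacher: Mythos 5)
Your proof is correct, and while the first half (the determinant $P=\det L$, singularity $\Leftrightarrow$ existence of a nonzero $w$, and algebraicity of coefficients via an algebraic realisation of the representation) coincides with the paper, your treatment of the ``moreover'' part is genuinely different. The paper does not argue by contradiction from identical vanishing: for each fixed $\alpha\neq0,1$ it simply exhibits one explicit value of $\beta$ (namely $\beta=(1-\alpha)/2$ when $0<\alpha<1$ or $\alpha>1$, and $\beta=\alpha$ when $\alpha<0$) at which equation $L(\alpha,\beta)w=0$ rearranges to express a vector of norm $\|w\|$ as a positive combination, with coefficients summing to $1$, of the equal-norm vectors $A(w),B(w),C(w)$; strict convexity then forces $w=A(w)=B(w)=C(w)$, which is impossible for an irreducible non-trivial representation, so $P(\alpha,\beta)\neq0$ there. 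You instead suppose $P(\alpha,\cdot)\equiv0$, view $L=F+\beta S$ as an identically singular pencil, extract a polynomial kernel vector with nonzero constant term $v_0$, and use orthogonality plus the Cauchy--Schwarz equality case (first to get $Av_0=Cv_0=v_0$ from $Fv_0=0$, then, via $\ker F=\ker F^{*}$ and $Sv_0\in\operatorname{im}F=(\ker F)^{\perp}$, to get $Bv_0=v_0$), producing a trivial subrepresentation and the same contradiction. Your argument is uniform in $\alpha$ (no case split, no choice of a special $\beta$) at the cost of the function-field kernel-vector machinery; the paper's is more elementary and has the side benefit, which the authors point out, that at their chosen $\beta$ any quadrilateral realising the parameters would not even be convex. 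Both ultimately rest on the same two pillars: orthogonality forcing equal norms (used via strict convexity in the paper, via Cauchy--Schwarz equality in your version) and the impossibility of a common fixed vector in a non-trivial irreducible representation.
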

\begin{proof}
Let $d$ be the dimension of the representation. Suppose that that
there exists non-zero $w\in\R^d$ with
\begin{equation}\label{e1}
w=C(w)+\alpha(A(w)-C(w))+\beta(B(w)-C(w)).
\end{equation}
Then
\[
\left(\alpha A+\beta B+(1-\alpha-\beta)C-I\right)(w)=0
\]
and so, in particular, the linear map
\[
L(\alpha,\beta)=\alpha A+\beta B+(1-\alpha-\beta)C-I
\]
is singular. Conversely, if $L(\alpha,\beta)$ is singular then there
exists non-zero $w$ with $L(\alpha,\beta)(w)=0$; that is, with $w$
satisfying (\ref{e1}).

Obviously $P(\alpha,\beta)=\det(L(\alpha,\beta))$ is a polynomial in
$\alpha$ and $\beta$.  It is well known that any representation of $G$
over $\R$ is equivalent to a representation with all matrix entries
algebraic. Since the polynomial $\det(L(\alpha,\beta))$ is the same
for equivalent representations, this implies that $P(\alpha,\beta)$
has algebraic coefficients.

Suppose $\alpha\not=0,1$. Since $P$ is a polynomial, to complete the
proof we just need to give one value of $\beta$ for which $P$ is
non-zero or, equivalently, for which there exists non-zero $w$ such
that $L(\alpha,\beta)(w)=0$. This is the same as saying that there is
no non-zero $w$ for which the quadrilateral $w$, $A(w)$, $B(w)$,
$C(w)$ has parameters $\alpha$ and $\beta$.

We note first that a quadrilateral $w,A(w),B(w),C(w)$ cannot be trivial; that is
\begin{equation}\label{e2}
w=A(w)=B(w)=C(w)
\end{equation}
cannot occur for any (non-zero) $w$. Indeed, this would imply that the
representation is either reducible or one-dimensional. The former
cannot occur by hypothesis; the latter cannot occur since then
$A=B=C=I$, which is ruled out as the representation is
non-trivial.

Rather surprisingly, we will be able to choose $\beta$ in such a way
that the only possible (non-trivial) quadrilaterals with parameters
$\alpha$ and $\beta$ that occur in this way are not even convex (so
manifestly are not cyclic and so do not embed in a transitive set).
We split into the following three cases: $\alpha<0$, $0<\alpha<1$,
and $\alpha>1$.  

First, if $0<\alpha<1$ then let $\beta=(1-\alpha)/2$. Then (\ref{e1})
becomes
\[
w=\alpha A(w)+\frac{1-\alpha}{2}B(w)+\frac{1-\alpha}{2}C(w).
\]
Since $\alpha>0$ and $(1-\alpha)/2>0$, this
implies~(\ref{e2}), a contradiction.

Secondly, if $\alpha<0$ then let $\beta=\alpha$. Then (\ref{e1}) becomes
\[
(-\alpha) A(w)+(-\alpha)B(w)+ w= (1-2\alpha)C(w).
\]
Since $-\alpha$ and $1-2\alpha$ are both positive, once again this
implies~(\ref{e2}).

Finally, if $\alpha>1$ then let $\beta=(1-\alpha)/2$.  By a similar
argument to the previous case, this again implies~(\ref{e2}). This
completes the proof.
\end{proof}

\noindent%
{\it Proof of Theorem 1.}  
Suppose that we have a transitive set $T$
with symmetry group $G$ and a non-trivial quadrilateral $xyzw$ in $T$
with
\begin{equation}\label{e3}
 w=z+\alpha(x-z)+\beta(y-z).
\end{equation}
We aim to show that $\beta$ is algebraic over $\Q(\alpha)$.  Let
$A,B,C$ be elements of $G$ that map $w$ to $x,y,z$ respectively, and
let $G'$ be the group generated by $A,B$ and $C$.

We shall use Lemma~\ref{l:transitive}, but first observe that if
$\alpha=0$ then $w$, $z$ and $y$ are collinear and so at least two must
coincide, contradicting the hypothesis of the theorem.

Suppose that the representation of $G'$ is reducible as $V\oplus W$.
Then (\ref{e3}) holds for the projections onto $V$ and $W$, and in at
least one of these cases the projected quadrilateral is non-trivial.

It follows that the parameters $(\alpha,\beta)$ occur for some some
(non-trivial) irreducible representation of $G'$.  This implies that
$P(\alpha,\beta)=0$ for some polynomial $P$ as in the conclusion of
Lemma~\ref{l:transitive}. Writing $R(Y)$ for $P(\alpha,Y)$ and $\A$
for the field of algebraic numbers, we have that the polynomial $R$
has coefficients in $\A(\alpha)$ and is not identically zero.
Moreover, $R(\beta)=0$. Hence $\beta$ is algebraic over $\A(\alpha)$
and so is algebraic over $\Q(\alpha)$.\qed

\end{document}